\documentclass[a4paper]{amsart} 
\usepackage{graphicx}
\usepackage{color}
\usepackage{hyperref}
\usepackage{tikz,amssymb,amsmath} 
\usepackage[arrow,curve,matrix,tips]{xy}
\usepackage{enumerate}
\usepackage{tkz-euclide}
\usepackage{esint}
\usepackage{setspace}
\usepackage{verbatim}
\makeatletter
\tikzset{
  treenode/.style = {align=center, inner sep=0pt, text centered,
    font=\sffamily},
  arn_n/.style = {treenode, circle, white, font=\sffamily\bfseries, draw=black,
    fill=black, text width=1.5em},
  arn_r/.style = {treenode, circle, red, draw=red, 
    text width=1.5em, very thick},
  arn_x/.style = {treenode, rectangle, draw=black,
    minimum width=0.5em, minimum height=0.5em}
}

\newtheorem{theorem}{Theorem}[section]
\newtheorem{lemma}[theorem]{Lemma}

\newtheorem{proposition}[theorem]{Proposition}
\newtheorem{corollary}[theorem]{Corollary}

\theoremstyle{definition} 
\newtheorem{definition}[theorem]{Definition}

\newtheorem{remark}[theorem]{Remark}

\numberwithin{equation}{section} 
\setcounter{tocdepth}{1}

\makeatletter
\def\moverlay{\mathpalette\mov@rlay}
\def\mov@rlay#1#2{\leavevmode\vtop{%
   \baselineskip\z@skip \lineskiplimit-\maxdimen
   \ialign{\hfil$\m@th#1##$\hfil\cr#2\crcr}}}
\newcommand{\charfusion}[3][\mathord]{
    #1{\ifx#1\mathop\vphantom{#2}\fi
        \mathpalette\mov@rlay{#2\cr#3}
      }
    \ifx#1\mathop\expandafter\displaylimits\fi}
\makeatother

\setcounter{tocdepth}{2}

\begin{document} 

\title{Quantifier elimination for quasi-real closed fields}

\author{Mickael Matusinski, Simon M\"uller}

\begin{abstract} 
We prove quantifier elimination for the theory of quasi-real closed fields with a compatible valuation. This unifies the same known results for algebraically closed valued fields and real closed valued fields.
\end{abstract}

\begin{center} {\small{\today}} \end{center}

\maketitle

\section{Introduction.}

\noindent Ordered fields and valued fields share several similar features, most notably via the notion of (order-)compatible valuations and the corresponding rank (see e.g. \cite{Zariski, Kuhlmatu, Kuhlmul} and below). In the present note we illustrate this by subsuming the following major model theoretic results: the (first order) theories of algebraically closed fields with a non-trivial valuation, respectively real closed fields with a \emph{compatible} non-trivial valuation, both admit quantifier elimination (cf. \cite{Robinson}, respectively \cite{Cherlin}). We prove that the same applies if we consider the theory of the union of these two classes in a common language, the one of quasi-ordered fields. Quasi-ordered fields were introduced in \cite{Fakh} and provide a uniform axiomatisation of ordered and valued fields. We derive a notion of quasi-real closed field, which subsumes the classes of algebraically closed fields and real closed fields. Note that real closed fields were developed by Artin and Schreier in \cite{ArtinSchreier} as maximal ordered fields, in analogy to the notion of algebraic closure of a field.

\section{Preliminaries on quasi-ordered fields.} \label{sect:qo-fields}

\noindent
Throughout this note, all orderings and quasi-orderings are always assumed to be total. An \emph{ordered field} is a field $K$ equipped with an ordering $\leq$ (i.e. a binary, reflexive, transitive  and anti-symmetric relation) such that the following axioms are satisfied for all $x,y,z \in K\colon$
\begin{itemize}
	\item[(O1)] $x \leq y\, \land\, 0 \leq z \ \Rightarrow\ xz \leq yz,$
	\item[(O2)] $x \leq y \ \Rightarrow\  x+z \leq y+z.$ 
\end{itemize} 

\noindent
Orderings are in one-to-one correspondence with so-called \emph{positive cones}, i.e. subsets $P \subseteq K$ such that $PP \subseteq P,$ $P+P \subseteq P,$ $P\cup(-P)=K$ and $-1\notin P$: $0 \leq x\Leftrightarrow x\in P$.
 
\noindent
A valuation $v$ on a field $K$ (in the sense of Krull \cite{Krull}) is known to endow $K$ with a \emph{quasi-ordering}, i.e.  a binary, reflexive, transitive relation $\preceq,$ as follows:
$$a \preceq b \Leftrightarrow v(b)\leq v(a).$$
If $\preceq$ is a quasi-ordering, we write $x \asymp y$ as shorthand for $x \preceq y \preceq x.$ Note that if $\preceq$ is an ordering, then $\asymp$ is just equality, whereas if $\preceq$ is induced by a valuation $v$ as above, then $x \asymp y$ if and only if $v(x) = v(y).$ Moreover, we write $x \prec y$ as shorthand for $x \preceq y$ and $x \not\asymp y.$

\enlargethispage{2mm}

\noindent
Pushing the analogy of ordered and valued fields further, Fakhruddin introduced the notion of \emph{quasi-ordered field}, that is a field $K$ equipped with a quasi-ordering $\preceq$ such that the following axioms are satisfied for all $x,y,z \in K\colon$
\begin{itemize}
	\item[(Q0)] $x \asymp 0 \ \Rightarrow \ x = 0,$
	\item[(Q1)] $x \preceq y \,\land\, 0 \preceq z \ \Rightarrow \ xz \preceq yz,$
	\item[(Q2)] $x \preceq y \,\land\, z \not\asymp y \ \Rightarrow \ x+z \preceq y+z.$
\end{itemize}

\begin{theorem}[Fakhruddin's dichotomy]  \label{dichotomy} \cite[Theorem 2.1]{Fakh} Let $K$ be a field and $\preceq$ a binary relation on $K$. Then $(K,\preceq)$ is a quasi-ordered field if and only if it is either an ordered field or there is a valuation $v$ on $K$ such that $x \preceq y \Leftrightarrow v(y) \leq v(x)$ for all $x,y \in K.$
\end{theorem}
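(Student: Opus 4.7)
The \emph{if} direction is a routine verification. For an ordered field, $\asymp$ is equality, so (Q0) is immediate; (Q1) coincides with (O1); and (Q2) is a weakening of (O2). For a valuation-induced quasi-ordering, (Q0) is the defining axiom $v^{-1}(\infty) = \{0\}$; (Q1) follows from $v(xz) = v(x) + v(z)$; and (Q2) follows from the ultrametric inequality, the hypothesis $v(z) \neq v(y)$ pinning down $v(y+z) = \min\{v(y), v(z)\}$ by a short case analysis.

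For the converse, my plan is to case-split on the position of $-1$ relative to $0$: by (Q0) we have $-1 \not\asymp 0$, so totality yields either $-1 \prec 0$ (pointing to the ordered case) or $0 \prec -1$ (pointing to the valuation case, since there $v(-1)$ is finite while $v(0) = \infty$). In the first subcase, I would show that $\preceq$ is in fact a total order on $K$ satisfying (O1) and (O2). The crux is antisymmetry, i.e.\ that $\asymp$ coincides with equality: assuming $x \asymp y$ with $x \neq y$, one should manipulate $x - y$ via (Q1) and (Q2) together with the sign constraint $-1 \prec 0$ to derive a contradiction. Once antisymmetry is in hand, (O1) is (Q1), and for (O2) one applies (Q2) when $z \not\asymp y$, while the residual case $z = y$ reduces directly to $x \preceq y$.

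In the second subcase ($0 \prec -1$), I would construct the valuation explicitly. Set $v(0) := \infty$; on $K^\times$ form $\Gamma := K^\times / {\asymp}$, which is an abelian group by the multiplicative compatibility of $\asymp$ derived from (Q1), and order it by $[a] \leq [b] \Leftrightarrow b \preceq a$. Define $v(x) := [x]$ for $x \neq 0$. Multiplicativity $v(xy) = v(x) + v(y)$ is built in, and $v^{-1}(\infty) = \{0\}$ is (Q0). The decisive step is the ultrametric inequality $v(x + y) \geq \min\{v(x), v(y)\}$: when $v(x) \neq v(y)$ it is exactly the content of (Q2), but when $v(x) = v(y)$ axiom (Q2) does not apply directly, and one must exploit $0 \prec -1$ together with a multiplicative reduction (e.g.\ rescaling and comparing $1$ with $-y/x$) to obtain the bound. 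This last subcase, and the companion task of turning $0 \prec -1$ into enough ``sign-freeness'' to absorb additive cancellation, is the principal obstacle of the proof; once it is cleared, agreement of the induced quasi-ordering with $\preceq$ is immediate from the construction.
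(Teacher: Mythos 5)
Your ``if'' direction is fine, and your skeleton for the converse --- splitting on whether $-1 \prec 0$ or $0 \prec -1$, aiming for an ordering in the first case and building the value group $K^\times/\!\asymp$ in the second --- is exactly the right one (it is the dichotomy the paper itself alludes to when it remarks that $\preceq$ is valuation-induced if and only if $0 \prec -1$; note the paper gives no proof of this theorem, it simply cites Fakhruddin, so the comparison is with his original argument, which follows the same outline).

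The problem is that everything hard is left as an intention. In the case $-1 \prec 0$ you do not prove antisymmetry, you only say one ``should manipulate $x-y$ via (Q1) and (Q2)''; and the residual case of (O2) is not ``direct'': with $z = y$ you need $x + y \preceq 2y$ from $x \preceq y$, and (Q2) is inapplicable because $y \asymp y$, so this too requires its own argument (via auxiliary facts such as $0 \prec 1$, $a \preceq b \Rightarrow -b \preceq -a$, etc.). In the case $0 \prec -1$ you must first establish that $0 \preceq z$ for \emph{all} $z$ (otherwise (Q1) does not make $\asymp$ a multiplicative congruence, nor the order on $\Gamma$ translation-invariant), that $1 \asymp -1$, that the order on $\Gamma$ is well defined and total, and above all the ultrametric inequality in the equal-value case $x \asymp y$ --- the one step you explicitly flag as ``the principal obstacle'' and do not carry out. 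These verifications are precisely the mathematical content of Fakhruddin's Theorem 2.1 (his paper consists essentially of the chain of lemmas performing these (Q1)/(Q2) manipulations), so as written your proposal is a correct plan with its decisive steps missing rather than a proof.
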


\noindent 
Note that $\preceq$ is induced by a valuation $v$ on $K$ if and only if $0 \prec -1.$ In that case $v$ is unique up to equivalence of valuations, and we denote the quasi-ordering $\preceq$ also by $\preceq_v.$

\begin{remark}  In \cite{Efrat}, a \emph{locality} on a field is, by definition, either an ordering or a valuation. Hence, by Fakhruddin's dichotomy, localities and quasi-orderings are in fact equivalent objects.
\end{remark}

\noindent
Baer and Krull exhibited another strong relation between orderings and valuations on fields \cite{Baer,Krull}: any ordering $\leq$ on $K$ induces a valuation on $K,$ the so-called \emph{natural valuation} of $(K,\leq).$ It is the valuation on $K$ whose valuation ring is the convex hull of $\mathbb{Z}$ in $K,$ i.e. the smallest convex subring of $K$ w.r.t. $\leq.$ An ordering $\leq$ and a valuation $v$ on $K$ are called \emph{compatible}, if $K_v$ is convex. We also say that $v,$ respectively $K_v,$ is convex. In \cite{Kuhlmatu}, the authors generalised this notion to quasi-orderings: given a quasi-ordered field $(K,\preceq)$, a valuation $v$ on $K$ -- or equivalently a quasi-ordering $\preceq_v$ -- is said to be \emph{compatible with $\preceq$}, if the valuation ring $K_v$ is $\preceq$-convex, or equivalently, if $0 \preceq x\preceq y\, \Rightarrow\, x\preceq_v y$ for all $x,y \in K.$ Note that if $\preceq$ is induced by a valuation $w,$ then $\preceq_v$ is compatible with $\preceq_w$ if and only if $v$ is a coarsening of $w.$ For a further discussion on this subject we refer the interested reader to \cite{Kuhlmatu} and \cite{Kuhlmul}. \vspace{1mm}

\noindent 
We conclude this section by recalling real closed fields. An ordered field $(K,\leq)$ is said to be \emph{real closed} if one of the following two equivalent conditions is satisfied:

\begin{itemize}
	\item[(RC1)] $\leq$ does not extend to any proper algebraic extension of $K.$
	\item[(RC2)] $(K,\leq)$ satisfies the following conditions:
	\begin{itemize}
		\item[(i)] any $x \in K$ with $0 \preceq x$ has a square root in $K,$
		\item[(ii)] any polynomial $f \in K[X]$ of odd degree has a root in $K.$
	\end{itemize}
\end{itemize}

\noindent
Any ordered field $(K,\leq)$ admits a \emph{real closure}, i.e. an algebraic field extension which is real closed. It is unique up to isomorphism of ordered fields (\cite[Satz 8]{ArtinSchreier}).

\section{Quasi-real closed fields}

\noindent
In the present section we introduce quasi-real closed fields, that way unifying algebraically closed fields and real closed fields. To this end, we impose (RC1) on quasi-ordered fields.


\begin{definition}\label{qrcfield1} A quasi-ordered field $(K,\preceq)$ is called \emph{quasi-real closed}, if $\preceq$ does not extend to any proper algebraic extension of $K.$
\end{definition}

\begin{lemma} \label{acfield} Let $K$ be a field. The following are equivalent:
	\begin{enumerate}
		\item $K$ is algebraically closed,
		\item $(K,\preceq_v)$ is quasi-real closed for some valuation $v$ on $K,$ 
		\item $(K,\preceq_v)$ is quasi-real closed for any valuation $v$ on $K.$
	\end{enumerate}
\end{lemma}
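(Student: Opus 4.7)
The plan is to prove the cyclic chain $(1) \Rightarrow (3) \Rightarrow (2) \Rightarrow (1)$, which makes the three statements equivalent.

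The implication $(1) \Rightarrow (3)$ I would dispose of in one line: if $K$ is algebraically closed, then $K$ has no proper algebraic extension at all, so the defining condition of Definition~\ref{qrcfield1} holds vacuously for $(K, \preceq_v)$ no matter which valuation $v$ is chosen. For $(3) \Rightarrow (2)$, it suffices to observe that at least one valuation exists on every field (for instance the trivial one); a short verification of (Q0)--(Q2) shows that the trivial valuation does induce a bona fide quasi-ordering on $K$ (with all nonzero elements $\asymp$-equivalent), so (2) follows from (3).

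The substance of the lemma lies in $(2) \Rightarrow (1)$. Here I would argue by contraposition: assume $K$ admits some proper algebraic extension $L/K$, and show that then $\preceq_v$ extends to a quasi-ordering on $L$ for every valuation $v$ on $K$. The key input is Chevalley's classical extension theorem from valuation theory: the valuation $v$ lifts to a valuation $w$ on $L$ whose restriction to $K$ is equivalent to $v$. Equivalent valuations induce identical quasi-orderings, so $a \preceq_w b \Leftrightarrow w(b) \leq w(a) \Leftrightarrow v(b) \leq v(a) \Leftrightarrow a \preceq_v b$ for $a,b \in K$. Thus $\preceq_w$ is a quasi-ordering on the proper algebraic extension $L$ that extends $\preceq_v$, contradicting the assumption that $(K, \preceq_v)$ is quasi-real closed.

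The main (mild) obstacle is just citing the extension theorem for valuations and noting that equivalent valuations yield the same induced quasi-ordering; once these are in hand, the rest is pure unwinding of definitions.
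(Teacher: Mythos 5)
Your proposal is correct and follows essentially the same route as the paper: the vacuous implication $(1)\Rightarrow(3)$, existence of a valuation (e.g.\ the trivial one) for $(3)\Rightarrow(2)$, and Chevalley's extension theorem for $(2)\Rightarrow(1)$, which the paper phrases directly rather than by contraposition. Your added remark that equivalent valuations induce the same quasi-ordering is a harmless (and welcome) bit of extra care, not a divergence.
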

\begin{proof} If $K$ is algebraically closed, then $K$ admits no proper algebraic extension, whence $(K,v)$ is quasi-real closed for any valuation $v$ on $K.$ Consequently, (1) implies (3). Moreover, since any field admits a valuation, clearly (3) implies (2). Finally, suppose that (2) holds. Note that $v$ extends to any field extension $L$ of $K$ by an immediate consequence of Chevalley's Extension Theorem \cite[Theorems 3.1.1 and 3.1.2]{Prestel}. Since $(K,v)$ is quasi-real closed, this means that $K$ does not admit any proper algebraic extension, i.e. that $K$ is algebraically closed. Thus, (2) implies (1).
\end{proof}

\begin{theorem} \label{dichoclosed} A quasi-ordered field $(K,\preceq)$ is quasi-real closed if and only if one of the following two equivalent conditions holds:
	\begin{description}
		\item[(QRC1)]  either $\preceq$ is an ordering and $(K,\preceq)$ is a real closed field, or $\preceq$ is induced by a valuation and $K$ is an algebraically closed field;
		\item[(QRC2)] $(K,\preceq)$ satisfies the following conditions:
		\begin{enumerate}
			\item[(i)] any $x \in K$ with $0 \preceq x$ has a square root in $K.$
						\item[(ii)] any polynomial $f \in K[X]$ of odd degree has a root in $K.$
		\end{enumerate}
	\end{description} 
\end{theorem}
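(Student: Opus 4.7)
The plan is to reduce to the two well-known settings provided by Fakhruddin's dichotomy (Theorem \ref{dichotomy}). Either $\preceq$ is an ordering, or $\preceq$ is induced by a valuation $v$ (with $0 \prec -1$). In each case I aim to identify the conditions (QRC1), (QRC2), and quasi-real closedness with familiar conditions from the classical theory.

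First I would argue that quasi-real closed is equivalent to (QRC1). If $\preceq$ is an ordering, then Definition \ref{qrcfield1} is literally (RC1), so $(K,\preceq)$ is real closed, which is the ordering clause of (QRC1). If $\preceq = \preceq_v$ for a valuation $v$, then by Lemma \ref{acfield} quasi-real closedness is equivalent to $K$ being algebraically closed, which is the valuation clause of (QRC1). So this direction is essentially bookkeeping on top of what is already in the paper.

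Next I would prove (QRC1) $\Leftrightarrow$ (QRC2), again case by case. In the ordering case, $0 \preceq x$ and $0 \leq x$ agree, so (QRC2) coincides with (RC2), and the classical Artin--Schreier theorem yields (RC1) $\Leftrightarrow$ (RC2). In the valuation case, observe first that $0 \preceq_v x$ holds for every $x \in K$, since $v(x) \leq v(0) = \infty$; hence (QRC2)(i) states that every element of $K$ has a square root. The direction (QRC1) $\Rightarrow$ (QRC2) is immediate because algebraically closed fields satisfy every such root condition. For the converse, I would show $K$ has no proper finite extension: from (QRC2)(ii) every irreducible polynomial of degree $>1$ has even degree, so every nontrivial finite extension of $K$ has even degree. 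Taking a Galois closure (after verifying $K$ is perfect, which follows from (ii) applied to $X^{p}-a$ when $p$ is an odd prime, and from (i) when $p = 2$) and applying Sylow's theorem, any proper finite Galois extension of $K$ must have a $2$-group as Galois group, whence yields an index-$2$ subgroup and thus a degree-$2$ intermediate extension of $K$. Completing the square together with (QRC2)(i) rules out degree-$2$ extensions, giving a contradiction.

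The main obstacle I foresee is characteristic $2$: there, completing the square is unavailable, and (QRC2)(i) does not obstruct Artin--Schreier degree-$2$ extensions $X^{2}+X-a$. So in char~$2$ one needs an extra observation (or implicit exclusion) to close the argument; everywhere else the Sylow/completion-of-the-square strategy above runs through cleanly.
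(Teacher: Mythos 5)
Your reduction via Fakhruddin's dichotomy is exactly the paper's: the (QRC1) equivalence is handled the same way (Definition \ref{qrcfield1} is (RC1) in the ordering case, Lemma \ref{acfield} in the valuation case), and the ordering case of (QRC2) is (RC2) verbatim. Where you diverge is the valuation case of (QRC2) $\Rightarrow$ algebraically closed: the paper disposes of it in one line by citing Shipman's Theorem 2, while you reprove it directly (perfectness from (i)/(ii), passage to a Galois closure, Sylow to force a $2$-group, an index-$2$ subgroup giving a quadratic subextension, killed by completing the square and (i)). That self-contained argument is correct and complete whenever $\mathrm{char}\,K \neq 2$, so in characteristic $0$ and odd characteristic your route is a perfectly good, more elementary substitute for the citation.

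The characteristic-$2$ obstacle you flag is not a gap you can close by a further trick: the implication fails there, so in fact you have located a defect in the statement (and in the paper's citation), not in your own argument. Take $K=\bigcup_{n \text{ odd}}\mathbb{F}_{2^n}$, the fixed field of the Sylow pro-$2$ subgroup $\mathbb{Z}_2$ of $\mathrm{Gal}(\overline{\mathbb{F}}_2|\mathbb{F}_2)$, equipped with the quasi-ordering $\preceq_v$ coming from a valuation $v$ on $K$ (only the trivial one exists, and Theorem \ref{dichoclosed} imposes no non-triviality). Then $0\preceq_v x$ for all $x$ and $K$ is perfect, so (i) holds; every finite extension of $K$ has $2$-power degree, so every odd-degree polynomial has a linear irreducible factor and (ii) holds; yet $X^2+X+1$ has no root, so $K$ is not algebraically closed and, by Lemma \ref{acfield}, $(K,\preceq_v)$ is not quasi-real closed. (A variant of the same Sylow fixed-field construction over the perfect hull of $\mathbb{F}_2(t)$ gives an example carrying a non-trivial valuation.) The paper's proof does not escape this: Shipman-type theorems require a root for every polynomial of prime degree, in particular for every quadratic, including Artin--Schreier polynomials $X^2+X-a$, and in characteristic $2$ ``every element has a square root'' is strictly weaker than that, so the appeal to \cite[Theorem 2]{Shipman} carries exactly the gap you identified. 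The statement is repaired either by excluding residue characteristic issues altogether (restricting to $\mathrm{char}\,K\neq 2$) or, better, by strengthening (i) in the valued case to ``every polynomial of degree $2$ has a root'', which is still first order and leaves the axiomatization $\Sigma_{\rm QRC}$ and the quantifier-elimination results of Section 4 intact.
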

\begin{proof} (QRC1) is an immediate consequence of Fakhruddin's dichotomy (Theorem \ref{dichotomy}) and (RC1) (if $\preceq$ is an ordering), respectively Lemma \ref{acfield} (if $\preceq$ is induced by a valuation).

\noindent
Next, we consider (QRC2). If $\preceq$ is an ordering, this is precisely (RC2). So let $\preceq$ be induced by some valuation. If $(K,\preceq)$ is quasi-real closed, then $K$ is algebraically closed according to Lemma \ref{acfield}, whence (i) and (ii) are both fulfilled. Conversely, suppose that (i) and (ii) hold. Since $K=\{0 \preceq x\}$, we obtain by (i) that all elements in $K$ have a square root in $K.$ 
	Along with condition (ii), 
	 this implies that $K$ is algebraically closed according to \cite[Theorem 2]{Shipman}. Hence, $(K,\preceq)$ is quasi-real closed again by Lemma \ref{acfield}.
\end{proof}

\begin{definition} Let $(K,\preceq)$ be a quasi-ordered field. A quasi-ordered field $(L,\preceq')$ is called a \emph{quasi-real closure} of $(K,\preceq),$ if
	
	\begin{enumerate}
		\item $(L,\preceq')$ is a quasi-real closed field,
		
		\item $L|K$ is an algebraic field extension,
		
		\item $\preceq'$ is an extension of $\preceq,$ i.e. $K \, \cap \preceq' \, = \, \preceq.$
	\end{enumerate}
\end{definition}



\begin{proposition} \label{qrclosure} 
	Any quasi-ordered field $(K,\preceq)$ admits a quasi-real closure. It is unique up to isomorphism of quasi-ordered fields.
\end{proposition}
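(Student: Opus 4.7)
The plan is to use Fakhruddin's dichotomy (Theorem \ref{dichotomy}) to split the problem into two cases according to whether $\preceq$ is an ordering or is induced by a valuation, and then reduce each case to classical results.

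In the case where $\preceq$ is an ordering, I would simply invoke the Artin-Schreier theorem: $(K,\preceq)$ admits a real closure $(L,\leq')$, unique up to isomorphism of ordered fields. By Theorem \ref{dichoclosed}, condition (QRC1), such a real closure is automatically a quasi-real closure, and conversely any quasi-real closure of an ordered field must itself be an ordered field (since $\preceq'$ extends $\preceq$ and hence still satisfies $-1 \prec' 0$, so the dichotomy places it on the ordered side). Thus existence and uniqueness follow directly.

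In the case where $\preceq \, = \, \preceq_v$ is induced by a valuation $v$, existence is immediate: pick an algebraic closure $\bar K$ of $K$ and, by Chevalley's Extension Theorem, extend $v$ to a valuation $\bar v$ on $\bar K$. By Lemma \ref{acfield}, $(\bar K, \preceq_{\bar v})$ is quasi-real closed, and by construction it is an algebraic extension of $(K,\preceq_v)$. As in the previous case, Fakhruddin's dichotomy applied to a quasi-real closure of $(K,\preceq_v)$ shows that any such closure must itself have its quasi-ordering induced by a valuation (since $-1 \prec' 0$ persists under extension), and then Lemma \ref{acfield} forces its underlying field to be an algebraic closure of $K$.

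The main obstacle is uniqueness in the valued case: given two quasi-real closures $(L_1, \preceq_{v_1})$ and $(L_2, \preceq_{v_2})$, I must produce a \emph{field} isomorphism $\varphi : L_1 \to L_2$ fixing $K$ such that $\preceq_{v_2}$ pulled back along $\varphi$ equals $\preceq_{v_1}$, i.e. such that $v_2 \circ \varphi$ is equivalent to $v_1$. Since $L_1$ and $L_2$ are both algebraic closures of $K$, there is some $K$-isomorphism $\psi : L_1 \to L_2$, but in general $v_2 \circ \psi$ need only be \emph{some} extension of $v$ to $L_1$, not $v_1$ itself. The key input is the classical conjugacy theorem for extensions of a valuation to the algebraic closure: any two extensions of $v$ to $\bar K$ are conjugate under $\mathrm{Aut}(\bar K/K)$ (see, e.g., \cite[Theorem 3.2.15]{Prestel}). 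Applying this to $v_1$ and $v_2 \circ \psi$ on $L_1$, one obtains a $K$-automorphism $\tau$ of $L_1$ with $v_1 = v_2 \circ \psi \circ \tau$ up to equivalence; then $\varphi := \psi \circ \tau$ is the desired isomorphism of quasi-ordered fields, completing the proof.
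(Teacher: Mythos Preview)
Your argument is correct, and for uniqueness it is essentially identical to the paper's: both split via Fakhruddin's dichotomy, invoke Artin--Schreier in the ordered case, and use the conjugation theorem for extensions of a valuation to the algebraic closure in the valued case. The one genuine difference lies in the existence part. The paper gives a single uniform argument: fix an algebraic closure $\tilde K$, apply Zorn's lemma to the poset of quasi-ordered intermediate extensions $(L,\preceq')$ with $K\subseteq L\subseteq \tilde K$ and $\preceq'$ extending $\preceq$, and observe that a maximal element is quasi-real closed by definition. You instead split immediately into cases and construct the closure explicitly in each (real closure via Artin--Schreier, algebraic closure plus Chevalley extension). The paper's route has the virtue of being dichotomy-free for existence and of making the analogy with (RC1) transparent; your route is slightly more concrete and avoids checking the Zorn hypothesis, at the cost of losing the uniform treatment.
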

	\begin{proof} Let us consider an algebraic closure $\tilde{K}$ of $K$ and  the set 
		$$\left\{ (L,\preceq')\colon (L,\preceq') \textrm{ is a quasi-ordered field},\ K\subseteq L\subseteq \tilde{K},\ K\cap \preceq' \, = \, \preceq \right\},$$
		partially ordered by
		$$(L_1,\preceq_1') \leq (L_2,\preceq_2') :\Leftrightarrow L_1 \subseteq L_2 \textrm{ and } L_1 \, \cap \preceq_2' \; = \; \preceq_1'.$$
	By Zorn's lemma, there is a  maximal quasi-ordered algebraic field extension  $(L,\preceq')$ of $(K,\preceq)$. Hence, $(L,\preceq')$ is quasi-real closed and a quasi-real closure of $(K,\preceq).$ 

\noindent
The proof of the uniqueness relies on Fakhruddin's dichotomy and (QRC1). So let $(K_1,\preceq_1)$ and $(K_2,\preceq_2)$ be quasi-real closures of $(K,\preceq)$. If $\preceq$ is an ordering, then $(K_1,\preceq_1)$ and $(K_2,\preceq_2)$ are real closures of $(K,\preceq),$ whence they are order-isomorphic according to \cite[Satz 8]{ArtinSchreier}. 

\noindent
Likewise, if $\preceq $ is induced by some valuation on $K,$ then $K_1$ and $K_2$ are algebraic closures of $K.$ Hence, they are isomorphic as fields. Let $\preceq_1 = \preceq_{v_1}$ and $\preceq_2 = \preceq_{v_2}.$ Then the corresponding valuation rings $K_{v_1}$ and $K_{v_2}$ are conjugated by an element of $\mathrm{Gal}(\tilde{K}|K)$ (\cite[Conjugation Theorem 3.2.15]{Prestel}). By composition, we obtain an isomorphism between $(K_1,\preceq_1)$ and $(K_2,\preceq_2)$.
\end{proof}

\section{The theory of quasi-real closed fields and quantifier elimination}

\noindent
We conclude this note by proving that the theory of quasi-real closed fields with a non-trivial compatible valuation admits quantifier elimination. Moreover, we deduce that this theory is the model companion of the theory of quasi-ordered fields with a non-trivial compatible valuation. To this end, we exploit Fakhruddin's dichotomy and the fact that the theories of real closed valued fields and algebraically closed valued fields both admit quantifier elimination. 

\begin{theorem} \label{qeorval} $\mathrm{(cf.}$ \cite[Theorems 4.4.2 and 4.5.1]{Prestel3}$\mathrm{)}$
	\begin{enumerate}
		
		\item The theory of algebraically closed fields with a non-trivial valuation $v$ admits quantifier elimination in the language of fields adjoined with $\preceq_v.$
		
		\item The theory of real closed fields with a non-trivial compatible valuation $v$ admits quantifier elimination in the language of ordered fields adjoined with $\preceq_v.$
		
	\end{enumerate}
\end{theorem}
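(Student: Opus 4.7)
We propose to apply the standard substructure embedding test for quantifier elimination: a theory $T$ admits QE if and only if, for any two $\aleph_1$-saturated models $M_1, M_2 \models T$ sharing a common substructure $A$, every $b \in M_1$ realises over $A$ the same quantifier-free type as some element of $M_2$. In both cases the substructures are (compatibly) valued integral domains and, since localisation preserves the language, one may assume that $A$ is a (compatibly valued) subfield of both $M_i$.

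Split into the two usual cases. If $b \in M_1$ is algebraic over $A$, then in (1) we embed the valued field $A(b)$ into $M_2$ via an algebraic closure of $A$ inside $M_2$, matching the valuations by the Conjugation Theorem \cite[3.2.15]{Prestel} (cf.\ Lemma \ref{acfield}); in (2) we invoke the uniqueness up to order-isomorphism of the real closure of an ordered field (\cite[Satz 8]{ArtinSchreier}) and again conjugate the compatible valuation via the same conjugation theorem. If $b$ is transcendental over $A$, then its quantifier-free type over $A$ is pinned down by a small package of ordered/valued data: the cut of $v(b)$ in the divisible hull of $v(A)$, the residue of a suitable renormalisation of $b$ when that cut is rational, and, in case (2), the cut realised by $b$ in the real closure of $A$.

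To realise this data in $M_2$ we use the structure of the value group and residue field afforded by (QRC1) of Theorem \ref{dichoclosed}. In case (1) the value group of $M_2$ is divisible and the residue field is algebraically closed, so by $\aleph_1$-saturation we pick $\gamma \in v(M_2)$ realising the required cut and, if needed, a residue in the residue field of $M_2$; lifting jointly gives the desired $b' \in M_2$. In case (2) the value group is a divisible ordered abelian group and the residue field is real closed, and the order-cuts of $A$ in $M_2$ are realised by $\aleph_1$-saturation; the same lifting argument, now respecting the order, produces $b'$.

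The main obstacle is the amalgamation lemma underlying the transcendental step: that the ordered/valued data genuinely determine the quantifier-free type, i.e.\ that two transcendental extensions of $A$ with matching value-group cut, residue, and (in (2)) order-cut are isomorphic over $A$ as ordered/valued fields. This is the technical content of the cited \cite[Theorems 4.4.2 and 4.5.1]{Prestel3}; once it is granted, the embedding argument closes both halves of the theorem.
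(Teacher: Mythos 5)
The paper does not actually prove this statement: it is recalled as a known result, with the proof delegated wholesale to \cite[Theorems 4.4.2 and 4.5.1]{Prestel3} (originally Robinson \cite{Robinson} for algebraically closed valued fields and Cherlin--Dickmann \cite{Cherlin} for real closed fields with a convex valuation). Measured against that, your attempt sets up the right machinery --- the saturated-embedding test for quantifier elimination, reduction to valued subfields, the algebraic/transcendental split --- but as a proof it is circular exactly where it matters: the ``main obstacle'' you isolate, namely that your invariants determine the quantifier-free type of a transcendental element, is the entire content of the theorems to be proven, and you discharge it by citing those same theorems. Either you treat the statement as an external citation (as the paper does), in which case the embedding sketch adds nothing, or you must actually carry out the amalgamation analysis, which is where all the work is.

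Moreover, the invariant package you propose is not correct as stated. In case (1), the quantifier-free type of a transcendental $b$ over a valued subfield $A$ is not pinned down by the cut of $v(b)$ in the divisible hull of $v(A)$ together with one residue: there are immediate transcendental extensions, in which $v(A(b))=v(A)$ and the residue field does not grow, and there $b$ is a pseudo-limit of a pseudo-Cauchy sequence of transcendental type over $A$. The type is governed by the whole family of values $v(b-a)$ for $a\in A$ (after first enlarging $A$ to an algebraically closed valued subfield of the saturated models), and realising such a pseudo-limit in $M_2$ is precisely where $\aleph_1$-saturation is needed; your sketch omits this case entirely. An analogous refinement is required in case (2), where the order cut and the convex valuation interact and the cut alone does not suffice before passing to a real closed valued subfield. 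So the proposal is a reasonable roadmap of the classical Robinson and Cherlin--Dickmann arguments, but it neither supplies the missing amalgamation lemma nor states it correctly, whereas the paper deliberately treats the result as a black box from \cite{Prestel3}.
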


\noindent
The characterization (QRC2) from Theorem \ref{dichoclosed} allows us to formalise quasi-real closed fields in a first order language. As language of quasi-ordered fields with a compatible valuation $v$, we fix $\mathcal{L} = \{+,\cdot,-,\preceq,\preceq_v,0,1\}.$ Moreover, we denote by $\Sigma_{\rm QRC}$ the following set of $\mathcal{L}$-sentences:

\begin{enumerate}
	\item the axioms that $(K,+,\cdot,-)$ is a field.
	
	\item the axioms that $(K,\preceq)$ is a quasi-real closed field.
	
	\item the axioms that $(K,\preceq_v)$ is a quasi-ordered field with $v$ non trivial.
	
	\item the following $\mathcal{L}$-sentences that determine the relationship of $\preceq$ and $\preceq_v:$
	
	\begin{itemize}
		\item[(i)] $\phi_1 \equiv 0 \prec -1 \rightarrow (\forall x,y \colon 0 \preceq x \preceq y \leftrightarrow x \preceq_v y)$ \vspace{1mm}
		
		\item[(ii)] $\phi_2 \equiv -1 \prec 0 \rightarrow (\forall x,y \colon 0 \preceq x \preceq y \rightarrow x \preceq_v y)$ \vspace{1mm}
	\end{itemize}
\end{enumerate}

\noindent
The $\mathcal{L}$-sentences $\phi_1$ and $\phi_2$ state that $\preceq \, = \, \preceq_v$ if $\preceq$ is induced by a valuation, and that $v$ is $\preceq$-compatible if $\preceq$ is an ordering. That way, we have unified the languages of real closed valued fields and algebraically closed valued fields that are used in Theorem \ref{qeorval}. We refer to the theory of $ \Sigma_{\textrm{QRC}}$ as the \textit{theory of quasi-real closed fields with a non-trivial compatible valuation}. We also consider the \textit{theory of quasi-ordered fields with a non-trivial compatible valuation}, denoted by $\Sigma_{\rm QO},$ which consists of (1), (3), (4), and the axioms (Q0), (Q1) and (Q2) (see Section \ref{sect:qo-fields}).

\begin{theorem} \label{thmqe} The theory of \, $\Sigma_{\textrm{QRC}}$ admits quantifier elimination.
\end{theorem}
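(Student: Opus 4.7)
The plan is to reduce quantifier elimination for $\Sigma_{\rm QRC}$ to the two known quantifier elimination theorems recalled in Theorem \ref{qeorval}, via a case split along Fakhruddin's dichotomy. The crucial observation is that $\sigma \equiv 0 \prec -1$ is itself a quantifier-free $\mathcal{L}$-sentence which, by Fakhruddin's dichotomy and (QRC1), separates the models of $\Sigma_{\rm QRC}$ into two kinds: those satisfying $\sigma$ are exactly the algebraically closed valued fields with non-trivial $v$ (and by axiom $\phi_1$ we have $\preceq\,=\,\preceq_v$ there), while those satisfying $\neg \sigma$ are exactly the real closed fields with a non-trivial compatible valuation.

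Given an $\mathcal{L}$-formula $\phi(\bar{x})$, I would let $\phi^{\ast}(\bar{x})$ be the formula obtained by replacing every occurrence of $\preceq$ in $\phi$ by $\preceq_v$; then $\phi^{\ast}$ lives in the pure valued field language $\{+,\cdot,-,\preceq_v,0,1\}$. Since $\phi_1 \in \Sigma_{\rm QRC}$ forces $\preceq\,=\,\preceq_v$ whenever $\sigma$ holds, one has $\Sigma_{\rm QRC} \cup \{\sigma\} \vdash \phi \leftrightarrow \phi^{\ast}$. Applying Theorem \ref{qeorval}(1) to $\phi^{\ast}$ then yields a quantifier-free formula $\psi_1(\bar{x})$ with $\Sigma_{\rm QRC} \cup \{\sigma\} \vdash \phi \leftrightarrow \psi_1$. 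Symmetrically, Theorem \ref{qeorval}(2) applied in the language of real closed valued fields (which is a sublanguage of $\mathcal{L}$, with $\preceq$ playing the role of the ordering) produces a quantifier-free formula $\psi_2(\bar{x})$ with $\Sigma_{\rm QRC} \cup \{\neg \sigma\} \vdash \phi \leftrightarrow \psi_2$.

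Combining the two cases, $(\sigma \wedge \psi_1) \vee (\neg \sigma \wedge \psi_2)$ is a quantifier-free $\mathcal{L}$-formula equivalent to $\phi$ modulo $\Sigma_{\rm QRC}$, establishing the theorem. The only subtlety — and the step I would spell out most carefully — is the translation between $\preceq$ and $\preceq_v$ in the $\sigma$-case via the substitution $\phi \mapsto \phi^{\ast}$: this is justified by axiom $\phi_1$, which encodes that $\preceq$ and $\preceq_v$ are literally the same relation on such a model. Once that identification is made, no further model-theoretic work is required beyond invoking the two existing quantifier elimination results, and there is no genuine obstacle to the argument — the power of Theorem \ref{qeorval} is what is doing the work.
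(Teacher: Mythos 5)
Your proof is correct and follows essentially the same route as the paper: a case split along Fakhruddin's dichotomy/(QRC1) followed by an appeal to the known quantifier eliminations for algebraically closed valued fields and real closed fields with a compatible valuation (Theorem \ref{qeorval}). In fact your write-up is more careful than the paper's one-line argument, since you make explicit the gluing of the two cases via the quantifier-free sentence $0 \prec -1$ and the translation of $\preceq$ into $\preceq_v$ justified by $\phi_1$, details the paper leaves implicit.
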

\begin{proof} Any model of $\Sigma_{\textrm{QRC}}$ is either an algebraically closed field with a non-trivial valuation or a real closed field with a non-trivial compatible valuation (Corollary \ref{dichoclosed}). Therefore, the result follows from the fact that each of these two classes admits quantifier elimination (Theorem \ref{qeorval}).
\end{proof}

\begin{definition} \label{companion} Let $\Sigma \subseteq \textrm{Sent}(\mathcal{L})$ for a given language $\mathcal{L}.$ A set $\Sigma^* \subseteq \textrm{Sent}(\mathcal{L})$ is called a \textit{model companion} of $\Sigma,$ if
	\begin{enumerate}
		\item every model of $\Sigma^*$ is a model of $\Sigma,$
		
		\item every model of $\Sigma$ can be extended to a model of $\Sigma^*,$
		
		\item $\Sigma^*$ is model complete.
	\end{enumerate}
\end{definition}

\begin{corollary} $\mathrm{(cf.}$ \cite[Corollaries 4.4.3 and 4.5.4]{Prestel3}$\mathrm{)}$ \label{corqe} \\ The theory of $\, \Sigma_{\textrm{QRC}}$ is model complete. It is the model companion of the theory of $\, \Sigma_{\textrm{QO}}$.
\end{corollary}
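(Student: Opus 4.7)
The plan is to verify the three conditions of Definition~\ref{companion}. Condition~(3), the model completeness of $\Sigma_{\textrm{QRC}}$, is immediate from Theorem~\ref{thmqe}: quantifier elimination implies model completeness. Condition~(1) is also straightforward, since every quasi-real closed field is a quasi-ordered field and the remaining axioms of $\Sigma_{\textrm{QO}}$ (non-triviality and compatibility of the valuation) coincide with those built into $\Sigma_{\textrm{QRC}}$.

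The substantive step is condition~(2), the existence of an extension of every model of $\Sigma_{\textrm{QO}}$ to a model of $\Sigma_{\textrm{QRC}}$. Given $(K,\preceq,\preceq_v)\models\Sigma_{\textrm{QO}}$, I would use Fakhruddin's dichotomy (Theorem~\ref{dichotomy}) to split into two cases. If $\preceq$ is an ordering, then $(K,\preceq,v)$ is an ordered field with a non-trivial compatible valuation, and by the classical theory of real closed valued fields it embeds into an $\mathcal{L}$-structure $(R,\preceq',\preceq_{v'})$, where $R$ is a real closure of $K$ and $v'$ is a compatible extension of $v$ to $R$; this is a model of $\Sigma_{\textrm{QRC}}$ by~(QRC1). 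If $\preceq=\preceq_w$ is induced by a valuation $w$, then compatibility of $v$ with $\preceq_w$ means that $v$ is a coarsening of $w$. Passing to an algebraic closure $\tilde K$ of $K$, I would extend $w$ to some $\tilde w$ via Chevalley's Extension Theorem, and then lift the coarsening $v$ of $w$ to a coarsening $\tilde v$ of $\tilde w$ via the bijection between coarsenings of a valuation and prime ideals of its valuation ring; the structure $(\tilde K,\preceq_{\tilde w},\preceq_{\tilde v})$ is then a model of $\Sigma_{\textrm{QRC}}$ by Lemma~\ref{acfield} and~(QRC1), with non-triviality of $\tilde v$ inherited from that of $v$.

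The most delicate point I expect is this last valuation-theoretic bookkeeping in the second case, where one must arrange the two extensions $\tilde w$ and $\tilde v$ so that $\tilde v$ remains a coarsening of $\tilde w$. Choosing $\tilde v$ via the prime-ideal correspondence after $\tilde w$ has been fixed avoids the risk of incompatible arbitrary choices. In the first case, the analogous issue -- ensuring that the chosen extension of $v$ remains convex with respect to the extended ordering on $R$ -- is a well-known property of algebraic extensions of ordered fields with compatible valuations, so it can be invoked without further work. Once condition~(2) is established, all three clauses of Definition~\ref{companion} hold and the corollary follows.
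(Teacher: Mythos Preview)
Your proof is correct. Both you and the paper handle conditions~(1) and~(3) of Definition~\ref{companion} identically, but for condition~(2) the paper takes a more unified route: it passes directly to the quasi-real closure $(L,\preceq')$ of $(K,\preceq)$ via Proposition~\ref{qrclosure}, and then invokes a single extension result \cite[Theorem~20.1.1(a)]{Efrat} to obtain a compatible extension $\nu$ of $v$ to $L$, using the uniqueness clause of that theorem to conclude $\preceq'_{\nu}=\preceq'$ in the valuation case. Your approach instead splits on the dichotomy and handles the ordered and valued cases by separate classical arguments (real closure with a convex valuation extension, respectively algebraic closure with Chevalley plus a coarsening lift). This is more elementary---it avoids both Proposition~\ref{qrclosure} and the Efrat reference---but it forgoes the uniform treatment that is the organising theme of the paper. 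One minor remark: in your valuation case the axiom $\phi_1$ already forces $\preceq=\preceq_v$, i.e.\ $w=v$, so the coarsening-lifting step collapses and one automatically has $\tilde v=\tilde w$; your bookkeeping there is not wrong, merely more elaborate than the situation demands.
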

\begin{proof} Property (1) is obviously satisfied. The model completeness follows immediately from the quantifier elimination that we obtained in Theorem \ref{thmqe}. It remains to show that condition (2) of Definition \ref{companion} holds in our setting.
	
	\noindent
	So let $(K,\preceq,\preceq_v)$ be some quasi-ordered field with a non-trivial compatible valuation, and let $(L,\preceq')$ be a quasi-real closure of $(K,\preceq)$ (Proposition \ref{qrclosure}). Then $(L,\preceq')$ is a quasi-real closed field. Moreover, $L|K$ is algebraic, whence \cite[Theorem 20.1.1(a)]{Efrat} yields a unique extension $\preceq'_{\nu}$ of $\preceq_v$ from $K$ to $L$ such that $\nu$ is compatible with $\preceq'.$ If $\preceq$ is a valuation (i.e. $\preceq_v \, = \, \preceq$), then the uniqueness tells us that also $\preceq'_{\nu} \, = \, \preceq'.$ Furthermore, $\nu$ is non-trivial since $v$ is non-trivial. Hence, $(L,\preceq',\preceq'_{\nu})$ is an extension of $(K,\preceq,\preceq_v)$ and a model of the theory of quasi-real closed fields with a non-trivial compatible valuation.
\end{proof}

\begin{remark}
The theory of $\, \Sigma_{\textrm{QRC}}$ is not complete. For example the $\mathcal{L}$-sentence $\exists x \colon x^2 +1 = 0$ is false for any real closed field, but true for any algebraically closed field.
\end{remark} 

\noindent
\textbf{Open Question:} Can we replace the $\mathcal{L}$-sentences $\phi_1$ and $\phi_2$ in $\Sigma_{\textrm{QRC}}$ with the single $\mathcal{L}$-sentence $\phi \equiv \forall x,y \colon x \preceq y \rightarrow x \preceq_v y?$ This is equivalent to the question, whether algebraically closed fields admit quantifier elimination in the language of fields adjoined by $\preceq_v$ and $\preceq_w,$ where $v$ and $w$ are valuations on $K$ such that $v$ is coarser than $w.$

\section*{Acknowledgment} 
\noindent
The authors wish to thank Salma Kuhlmann for initiating and supporting their collaboration, and also for helpful discussion on the subject. Without her, this paper would not have come to pass.

 \vspace{4mm}

\small{

\noindent
\textsc{IMB, Universit\'{e}	 Bordeaux 1, 33405 Talence, France}, \\ 
\textit{E-mail address:} mickael.matusinski@math.u-bordeaux.fr \vspace{2mm}

\noindent
\textsc{Uni Konstanz, FB Mathematik und Statistik, 78457 Konstanz, Germany}, \\ 
\textit{E-mail address:} simon.2.mueller@uni-konstanz.de}


\begin{thebibliography}{}
\bibitem{ArtinSchreier}
E. Artin and O. Schreier, \textit{Algebraische Konstruktion reeller Körper}, Abh. Math. Sem. Univ. Hamburg \textbf{5} (1927), 85-99.
\bibitem{Baer}
R. Baer, \textit{Über nicht-archimedisch geordnete Körper}, Sitz. Ber. der Heidelberger Akad. Abh. \textbf{8} (1927), 3-13.
\bibitem{Cherlin} G. Cherlin, M. Dickmann, \textit{Real closed rings II}, Ann. Pure Appl. \textbf{25} (1983), no. 3, p.213-231.
\bibitem{Efrat}
I. Efrat, \textit{Valuations, Orderings and Milnor $K$-Theory}, Mathematical Surveys and Monographs \textbf{124}, American Mathematical Society, 2006.
\bibitem{Prestel}
A. J. Engler and A. Prestel, \textit{Valued Fields}, Springer Monographs in Mathematics, Springer-Verlag, Berlin, 2005.
\bibitem{Fakh} 
S.\,M. Fakhruddin, \textit{Quasi-ordered fields}, J. Pure Appl. Algebra \textbf{45} (1987), 207-210.
\bibitem{Krull}
W. Krull, \textit{Allgemeine Bewertungstheorie}, J. Reine Angew. Math. \textbf{167} (1932), 160-196.
\bibitem{Kuhlmatu}
S. Kuhlmann, M. Matusinski and F. Point, \textit{The valuation difference rank of a quasi-ordered difference field}, Groups, modules, and model theory - surveys and recent developments, 399-414, Springer, Cham, 2017.
A. J. Engler and A. Prestel, \textit{Valued Fields}, Springer Monographs in Mathematics, Springer-Verlag, Berlin, 2005.
\bibitem{Kuhlmul} 
S. Kuhlmann and S. M\"uller, \textit{Compatibility of quasi-orderings and valuations: a Baer-Krull theorem for quasi-ordered rings}, Order \textbf{36} (2019) 249–269. 
\bibitem{Prestel3}
A. Prestel and C. Delzell, \textit{Mathematical Logic and Model Theory - A Brief Introduction}, Universitext, Springer, London, 2011.
\bibitem{Robinson}
A. Robinson, \textit{Complete theories}, North-Holland Publishing Co., 1956, 249-269 pages.
\bibitem{Shipman}
J. Shipman, \textit{Improving the Fundamental Theorem of Algebra}, Math. Intelligencer \textbf{29} (2007), 9-14.
\bibitem{Zariski}
O. Zariski, P. Samuel, \textit{Commutative algebra Vol.II}, Graduate Texts in mathematics, Vol. \textbf{29}, Springer-Verlag, New York, 1975, Reprint of the 1960 edition, x+414 pages.
\end{thebibliography}
\end{document}